\documentclass{amsart}
\usepackage{amssymb}
\usepackage{amsfonts}
\usepackage{amsthm}
\usepackage[noadjust]{cite}
\usepackage{mathtools}
\usepackage{xcolor}
\usepackage{graphicx}
\usepackage{caption}
\usepackage{subcaption}

\setlength{\textwidth}{\paperwidth}
\addtolength{\textwidth}{-2in}
\calclayout

\usepackage{esint}

\numberwithin{equation}{section}
\setcounter{secnumdepth}{2}
\setcounter{tocdepth}{1}

\theoremstyle{plain}
\newtheorem{theorem}[equation]{Theorem}

\newtheorem{lemma}[equation]{Lemma}

\theoremstyle{remark}
\newtheorem{remark}[equation]{Remark}

\theoremstyle{definition}

\newtheorem{question}[equation]{Question}
\newtheorem*{question*}{Question}


\usepackage{datetime} 

\usepackage{graphicx}

\usepackage{url}

\title[non-embeddability of the slit carpet]{Regular mappings and non-existence of Bi-Lipschitz embeddings for Slit carpets}

\author{Guy C. David}
\address{Department of Mathematical Sciences\\ Ball State University, Muncie, IN 47306}
\email{gcdavid@bsu.edu}

\author{Sylvester Eriksson-Bique}
\address{Department of Mathematics, University of California, Los Angeles, Box 95155, Los Angeles, CA, 90095-1555}
\email{syerikss@math.ucla.edu}

\date{\today}

\subjclass[2010]{30L05.}

\newcounter{prob}
\setcounter{prob}{1}

\newcommand{\N}{\ensuremath{\mathbb{N}}}
\newcommand{\M}{\ensuremath{\mathbb{M}}}
\newcommand{\R}{\ensuremath{\mathbb{R}}}

\DeclareMathOperator{\diam}{diam}

\newcommand{\defeq}{\mathrel{\mathop:}=}

\def\XXint#1#2#3{{\setbox0=\hbox{$#1{#2#3}{\int}$ }
\vcenter{\hbox{$#2#3$ }}\kern-.58\wd0}}


\begin{document}

\begin{abstract}
We prove that the ``slit carpet'' introduced by Merenkov does not admit a bi-Lipschitz embedding into any uniformly convex Banach space. In particular, this includes any space $\R^n$, but also spaces such as $L^p$ for $p \in (1,\infty)$. This resolves Question 8 in the 1997 list by Heinonen and Semmes.
\end{abstract}

\maketitle

\section{Introduction}

In 1997, in the early days of the field now called ``analysis on metric spaces'', Heinonen and Semmes \cite{HS} posed a list of ``Thirty-three yes or no questions about mappings, measures, and metrics,'' which have gone on to be quite influential. A number of these questions have been solved since the publication of this list, but many remain open.

In this paper, we resolve, in the negative, Question 8 from that list:

\begin{question}[\cite{HS}, Question 8]\label{q:HS}
\textit{If an Ahlfors regular metric space admits a regular map into some Euclidean space, then does it admit a bi-Lipschitz map into another, possibly different, Euclidean space?}
\end{question} 

Precise definitions for the relevant terms in the question will be given in Section \ref{sec:notation} below. For now, a bi-Lipschitz map is simply an embedding that preserves distances up to a constant factor, and a regular map is a map that ``folds'' a metric space in a certain quantitative, $N$-to-$1$ manner. Thus, informally, Question \ref{q:HS} asks: if a metric space can be quantitatively \textit{folded} to fit into some Euclidean space, can it be quantitatively \textit{embedded} into some Euclidean space?

We give an explicit example showing that the answer to Question \ref{q:HS} (Question 8 of \cite{HS}) is ``no''. In fact, our example is a space that has already appeared in the literature as an interesting example in a different context. This is the ``slit carpet'' studied by Merenkov in \cite{Me}, which we denote $\M$. We postpone a formal definition of $\M$ to later in the paper, but the idea is the following. Start with the unit square $Q_0 = [0,1]^2$ in the plane. Cut a vertical ``slit'' in the square along the segment $\{\frac{1}{2}\} \times [\frac{1}{4},\frac{3}{4}]$. Now subdivide $Q_0$ into its four natural dyadic subsquares, and similarly remove a vertical slit of of half the side length from the center of each sub-square. Repeating this process in all dyadic squares of all scales in $Q_0$ leaves us with a remaining set $A\subseteq Q_0$, the complement of the countably many slits, which we equip with the \textit{shortest path metric}. The completion of $A$ in this metric is the slit carpet $\M$. See Figure \ref{fig:slitcarpet} below. (We give a slightly more careful definition of $\M$ as a certain limit below.) The name ``slit carpet'' comes from the fact that $\M$ is homeomorphic to the classical planar Sierpi\'nski carpet \cite[Lemma 2.1]{Me}. For an example of how $\M$ serves as a counterexample, see \cite{MW}, where the authors show that there is no quasisymmetric embedding $f: \M \to \R^2$.

\begin{figure}
	\centering
		\includegraphics[scale=0.2]{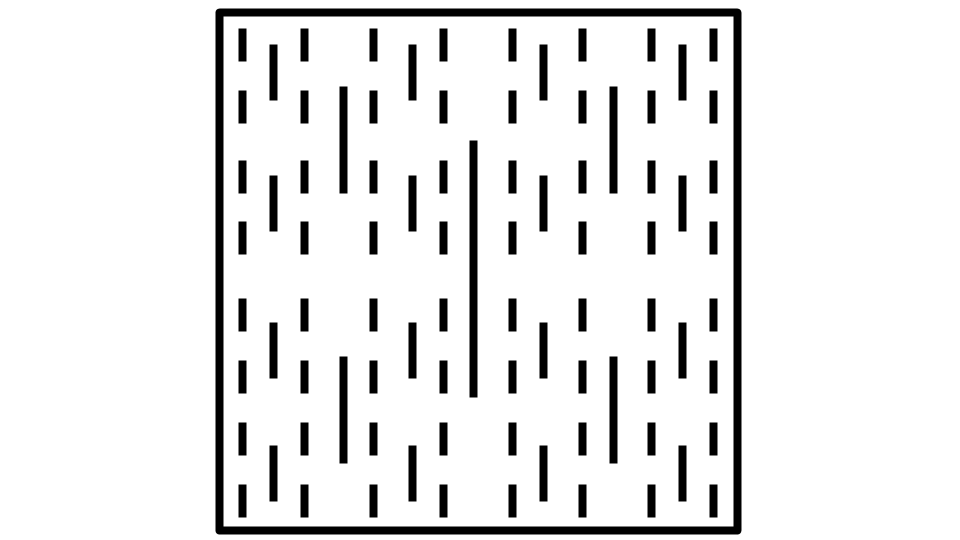}
		\caption{An approximation to the slit carpet $\M$}
	\label{fig:slitcarpet}
\end{figure}

Merenkov defined and studied $\M$ in the context of quasisymmetric mappings, showing that the double of $\M$ along its boundary gives the first known example of a space which is ``quasisymmetrically co-Hopfian'' for metric rather than topological reasons. In the course of defining $\M$, Merenkov observed that $\M$ is Ahlfors regular and that the natural map $\pi\colon \M \rightarrow Q_0 \subseteq \R^2$ given by ``collapsing the slits'' (which we define precisely in Section \ref{sec:notation}) is a regular mapping in the above sense, so $\M$ admits a regular mapping into some Euclidean space. 

However, we prove the following:

\begin{theorem}\label{main-theorem}
There is no bi-Lipschitz map $f\colon \mathbb{M} \to B$, for any uniformly convex Banach space $B$.
\end{theorem}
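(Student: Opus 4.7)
The plan is to derive a contradiction from a hypothetical bi-Lipschitz embedding by combining two ingredients: (i) Cheeger-type differentiation of Lipschitz maps into Banach spaces with the Radon-Nikod\'ym property, and (ii) the specific collapsing behavior of the projection $\pi$ on opposite sides of slits.

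First, I would set up differentiation on $\M$. Since $\M$ is Ahlfors $2$-regular and $\pi\co \M \to \R^2$ is a Lipschitz regular map, I expect the two coordinate functions $\pi_1,\pi_2$ to serve as a Cheeger chart of dimension $2$ for $\M$. The precise statement I need is that any Lipschitz $f\co \M \to B$ into a uniformly convex Banach space (which has RNP) is differentiable at almost every $q \in \M$ with respect to $\pi$: there exists a bounded linear $L_q\co \R^2 \to B$ with
\[
\|f(x) - f(q) - L_q(\pi(x) - \pi(q))\| = o(d_\M(x,q)), \qquad x \to q.
\]
This would follow from Cheeger-Kleiner differentiation once $\M$ is shown to carry a measurable differentiable structure with $\pi$ as chart, typically established via a $(1,p)$-Poincar\'e inequality for $\M$ that in turn comes from the abundant rectifiable curves afforded by short detours around slits.

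Next I would exploit the slit geometry. For each slit $S$ of length $r$ occurring in the construction of $\M$, there are two distinct points $p^+,p^- \in \M$ lying on opposite sides of the midpoint of $S$ with $\pi(p^+) = \pi(p^-)$ but $d_\M(p^+,p^-) \geq cr$, since any path joining them must loop around one of the tips of $S$. At a density point $q$ of the set of differentiability points of $f$, I would select, at each scale $r_k = 2^{-k}$, such a pair $(p^+_k,p^-_k)$ within distance $O(r_k)$ of $q$; the self-similar construction guarantees such slits near every point of $\M$. Applying the differentiability estimate at $q$ to both $p^+_k$ and $p^-_k$ and using $\pi(p^+_k)=\pi(p^-_k)$ yields
\[
\|f(p^+_k) - f(p^-_k)\| \leq \|L_q\| \cdot \|\pi(p^+_k) - \pi(p^-_k)\| + o(r_k) = o(r_k),
\]
which contradicts the bi-Lipschitz lower bound $\|f(p^+_k) - f(p^-_k)\| \gtrsim d_\M(p^+_k,p^-_k) \geq cr_k$ for $k$ large enough.

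The principal obstacle is establishing the differentiation framework: verifying that $\M$ admits a measurable differentiable structure for which $\pi$ genuinely serves as a chart, and that this chart-based differentiation extends to maps valued in arbitrary uniformly convex Banach spaces rather than just Hilbert space. The second half is the vector-valued Cheeger-Kleiner theorem applied to an RNP target; the first half, proving a Poincar\'e inequality on $\M$ or otherwise constructing the differentiable structure directly from the regular-map property of $\pi$, is where the main technical work is likely to go. Once that is in place, the slit-based obstruction described above is a short and clean way to close the argument.
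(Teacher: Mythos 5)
There is a genuine gap, and it is precisely the one the authors flag in the second remark of the introduction: the Cheeger-type differentiation framework you want to build in your first step cannot exist on $\M$. Your argument is conditional on $\M$ carrying a measurable differentiable structure (for RNP-valued Lipschitz maps) in which $\pi=(\pi_1,\pi_2)$ is a two-dimensional chart, to be obtained from a Poincar\'e inequality. But if $\M$ were an Ahlfors $2$-regular PI space (or, more generally, a Lipschitz differentiability space) with a $2$-dimensional chart $\pi$, then by the rigidity results the paper cites (the measure-preservation property of regular maps together with \cite[Theorem 1.5]{GCDKin}), $\pi$ would be bi-Lipschitz on a subset of positive measure, and hence the blowup of $\pi$ at almost every point of that subset would be bi-Lipschitz onto $\R^2$. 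This is impossible: every point of $\M$ lies within $O(2^{-k})$ of a slit of length $2^{-k-1}$ whose two midpoint preimages $m^{k,\pm}_{l,k}$ satisfy $\pi(m^{k,+})=\pi(m^{k,-})$ but $d(m^{k,+},m^{k,-})\geq 2^{-k-1}$ (Lemma \ref{lem:midpoint}), so no blowup of $\pi$ is injective anywhere. In other words, the very slit pairs you plan to use in your second step already show that your first step must fail; the ``abundant rectifiable curves afforded by short detours around slits'' do not yield a Poincar\'e inequality --- the slits at all locations and scales are exactly the obstruction. Your closing contradiction is fine \emph{as a conditional statement}, but the hypothesis it is conditioned on is false, so the proof does not go through.

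The paper's actual argument avoids differentiation entirely and is closer to Burago--Kleiner: one defines the maximal ``vertical'' distortion $L_v$ of $f$ over the family $\mathcal{W}$ of vertically adjacent pairs, picks a pair $(x,y)\in\mathcal{W}_n$ nearly attaining $L_v$, and replaces the vertical segment by a discrete path at a much finer scale $m$ in which a fixed proportion (half) of the length runs along level-$m$ slits (Lemma \ref{lem:approx}). A pigeonhole/averaging argument forces at least one of those fine slits to be stretched by nearly $L_v$; since its two endpoints then have an almost-metric-midpoint on \emph{each} side of the slit, uniform convexity (Lemma \ref{lem:unifconvex}) forces the images of the two midpoints $m^{m,\pm}$ to be within $\frac{b}{2}$ times the slit length of each other, contradicting the lower Lipschitz bound because $d(m^{m,+},m^{m,-})$ is at least the slit length. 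If you want to salvage a differentiation-flavored proof, you would need a notion of derivative that does not presuppose a chart structure on $\M$; as written, your approach cannot be completed.
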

In particular, $\M$ admits no bi-Lipschitz embedding into any $\R^n$, answering Question \ref{q:HS}, but also no such embedding into any $L^p$ space for $p\in (1,\infty)$.

The proof of Theorem \ref{main-theorem} is based on a technique appearing in seminal work of Burago-Kleiner \cite{BK}. This scheme begins by identifying a certain extremal pair of points, where the Lipschitz constant of the supposed embedding is nearly achieved. In the vicinity of such an extremal pair, with the aid of approximation, the mapping $f$ has to behave roughly linearly in this direction. Finally, this near-linearity is used to contradict some setting-dependent non-linear or non-Euclidean behavior. In our case, the non-Euclidean behavior arises from the presence of the slits at all locations and scales, which leads to a contradiction to the supposed bi-Lipschitz behavior of the map. 

Interestingly, the idea of looking for maximal or almost-maximal directional expansion for Lipschitz functions appears in other contexts as well, namely questions of finding points of differentiability in small sets. See \cite{Fitzpatrick, Preiss}.

We conclude the introduction with two remarks concerning Question \ref{q:HS} and an open question.

\begin{remark}
A 2005 paper by Movahedi-Lankarani and Wells \cite[VII (p. 262)]{MLW} mentions that Tomi Laakso informed those authors that he had an example showing that the answer to Question \ref{q:HS} is ``no'', and in fact has the same property we prove for $\M$ in Theorem \ref{main-theorem}. So far as we know, Laakso's example has never appeared in print, and we do not know if his example or his proof is the same as ours, though our proof certainly owes some ideas to \cite{Laakso}.
\end{remark}

\begin{remark}
One way of seeing the difficulty in Question \ref{q:HS} is to observe that a now-standard technique of proving non-embedding theorems for metric spaces, Cheeger's differentiation theory, cannot possibly provide a ``no'' answer to the question. Cheeger's theory \cite{Ch} endows certain metric spaces, the so-called PI spaces, with a type of ``measurable differentiable structure''. Using this theory, Cheeger showed that PI spaces whose blowups, in the pointed Gromov-Hausdorff sense, are not bi-Lipschitz homeomorphic to Euclidean spaces cannot admit bi-Lipschitz embeddings into any $\R^n$, work which was later generalized by other authors \cite{CK, CK_Banach, CKS, GCD}. This provides a unified approach to many non-embedding theorems, including that of the Heisenberg group \cite{CK_Banach} and Laakso spaces \cite{CK_PI}

On the other hand, it also follows from Cheeger's theory that an Ahlfors regular PI space whose blowups are not bi-Lipschitz homeomorphic to Euclidean spaces cannot even admit a regular map into any Euclidean space. (This follows from the above remarks, the measure-preservation property of regular mappings, and \cite[Theorem 1.5]{GCDKin}.) So Cheeger's differentiation theory is not directly useful for answering Question \ref{q:HS}.
\end{remark}

Of course, a number of interesting Banach spaces do not fit into the uniformly convex framework. For embedding questions, the most interesting of these are probably $\ell^1$ and $L^1$. 

\begin{question}\label{q:l1}
Does $\M$ admit a bi-Lipschitz embedding into $\ell^1$? Into $L^1?$
\end{question}
One way of approaching the question of an $L^1$ embedding is to see if $\M$ has ``Lipschitz dimension 1'' in the sense of \cite{CK}, which by Theorem 1.7 of that paper would force it to be bi-Lipschitz embeddable in $L^1$. Note that by \cite[Lemma 8.9]{GCDLip}, $\M$ has Lipschitz dimension $\leq 2$. However, we do not discuss Lipschitz dimension or Question \ref{q:l1} further in this paper.

\subsection*{Acknowledgments}
G.~ C.~ David was partially supported by the National Science Foundation under Grant No. DMS-1758709. S.~ Eriksson-Bique was partially supported by the National Science Foundation under Grant No. DMS-1704215.

\section{Notation and constructions}\label{sec:notation}

\subsection{Metric spaces and Banach spaces}
Our notation is fairly standard. If $X$ is a metric space, we denote its metric by $d$ unless otherwise specified. The diameter of a metric space is 
$$\diam(X) = \sup\{d(x,y):x,y\in X\}.$$
Open and closed balls in $X$ are denoted $B(x,r)$ and $\overline{B}(x,r)$, respectively.

If $X$ is a metric space and $d>0$, the $d$-dimensional Hausdorff measure on $X$ is defined by
$$ \mathcal{H}^d(E) = \lim_{\delta\rightarrow 0} \inf \sum_{B\in\mathcal{B}} (\diam(B))^d,$$
where the infimum is over all covers $\mathcal{B}$ of $E$ by sets of diameter at most $\delta$. (See, e.g., \cite[Section 8.3]{He}.) Various other standardizations of Hausdorff measure exist that differ from this one by multiplicative constants.

The Hausdorff dimension of a space $X$ is $\inf\{ d: \mathcal{H}^d(X)=0\}$. A stronger, scale-invariant version of having Hausdorff dimension $d$ is the notion of \textit{Ahlfors $d$-regularity}, used in Question \ref{q:HS}. A metric space $X$ is Ahlfors $d$-regular if there are constants $C>c>0$ such that
$$ cr^d \leq \mathcal{H}^d(\overline{B}(x,r)) \leq C r^d \text{ for all } r\leq \diam(X).$$

For Theorem \ref{main-theorem}, we also need to introduce the Banach space property of \textit{uniform convexity}. Recall that a Banach space $B$ is uniformly convex, if for every $\epsilon>0$, there exists a $\delta>0$ such that for any $x,y\in B$ with $\|x\|=\|y\| = 1$ and $\|x-y\| \geq \epsilon$, we have $$\left\|\frac{x+y}{2}\right\| \leq 1 - \delta.$$

\subsection{Lipschitz, bi-Lipschitz, and regular mappings}

Three basic classes of mappings are used in the rest of the paper. A mapping $f\colon X \rightarrow Y$ between metric spaces is called \textit{Lipschitz} if there is a constant $L>0$ such that
$$ d(f(x),f(y)) \leq L d(x,y) \text{ for all } x,y\in X.$$
It is called \textit{bi-Lipschitz} if there are constants $b, L>0$ such that
$$bd(x,y) \leq d(f(x),f(y) \leq Ld(x,y)$$
for all $x,y \in X$. The smallest $L$ and largest $b$ satisfying this are called the Lipschitz and lower-Lipschitz constants for $f$. The pair $(b,L)$ will be referred to as the bi-Lipschitz constants of $f$.

Lastly, a Lipschitz map $g\colon X \rightarrow Y$ is called \textit{regular} if there is a constant $C>0$ such that, for every ball $B\subseteq Y$ of radius $r$, the pre-image $g^{-1}(B)$ can be covered by at most $C$ balls of radius $Cr$.

In particular, regular mappings are always at most $C$-to-$1$, but the definition implies more than this. Regular mappings were introduced by David and Semmes \cite[Definition 12.1]{DS} as a kind of intermediate notion between Lipschitz and bi-Lipschitz mappings. One nice way in which regular mappings generalize bi-Lipschitz mappings is that they preserve the $d$-dimensional measure of all subsets, up to constant factors \cite[Lemma 12.3]{DS}.

\subsection{Construction of the slit carpet}
We now follow \cite[Section 2]{Me} to give a more careful definition of the slit carpet than that in the introduction. Though we use our own notation, the reader may wish to look at \cite{Me} for more details. Generalizations of this construction have also recently been studied by Hakobyan \cite{Hak}.

Let $Q_0=[0,1]^2$ be the unit cube in $\R^2$. Let $\mathcal{D}_n$ be the collection of dyadic sub-cubes of $Q_0$ at level $n$, that is cubes 
$$D_{l,k} = [l2^{-n},(l+1)2^{-n}] \times [k2^{-n}, (k+1)2^{-n}]$$
for $l,k \in \N \cap [0,2^n-1].$  The collection of all such dyadic cubes is denoted $\mathcal{D}$.

For each $D_{l,k}^n \in \mathcal{D}_n$, consider the points 
$$b_{l,k}^n = ((2l+1)2^{-n-1},(4k+1)2^{-n-2}) \text{ and } t_{l,k}^n=((2l+1)2^{-n-1},(4k+3)2^{-n-2}).$$

These define a vertical segment
$$s_{l,k}^n = [b_{l,k}^n,t_{l,k}^n] \subseteq Q_0,$$
which we call a \textit{slit} of level $n$. Note that $\diam(s_{l,k}^n) = 2^{-n-1}$. We call the set 
$$s_{l,k}^n \setminus \{b_{l,k}^n, t_{l,k}^n\}$$
the \textit{interior} of the slit $s_{l,k}^n$.

Define now $M_0 = Q_0 = [0,1]^2$, and, iteratively, $M_{k+1}=M_k \setminus \bigcup_{a,b} s_{a,b}^{k}$. (See Figure \ref{fig:carpets}.) Then define $\mathbb{M}_k$ as the completion of $M_k$ with respect to the shortest path metric $d_k$ on $M_k$. (We will continue to call this new complete metric on $\M_k$ by $d_k$.) This amounts to ``cutting along'' each slit of level $k$ or lower. Note that the $d_k$-diameter of each $\M_k$ is bounded by $3$.

\begin{figure}
\centering
\begin{subfigure}{0.3\textwidth}
  \centering
  \includegraphics[width=\linewidth]{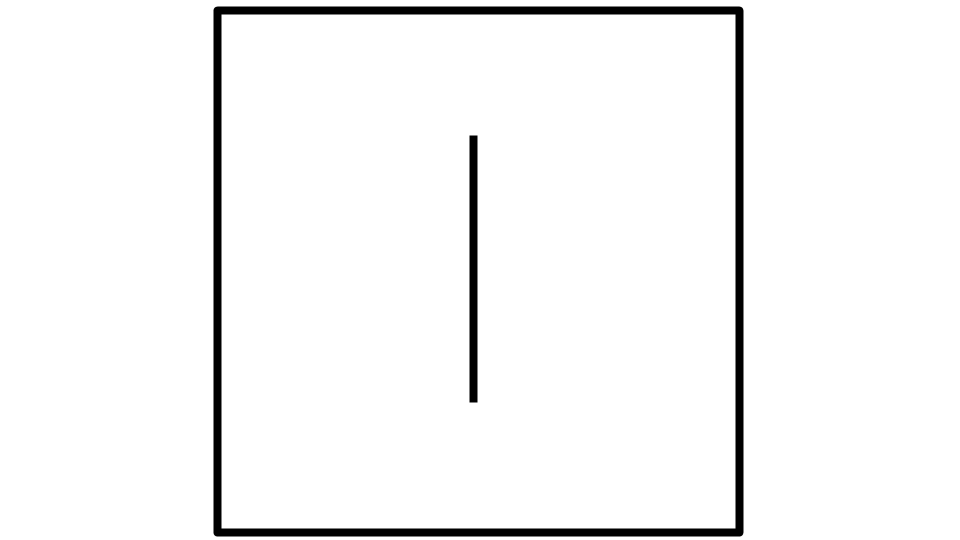}
  \caption{$M_1$}
  \label{fig:carpet1}
\end{subfigure}
\begin{subfigure}{0.3\textwidth}
  \centering
  \includegraphics[width=\linewidth]{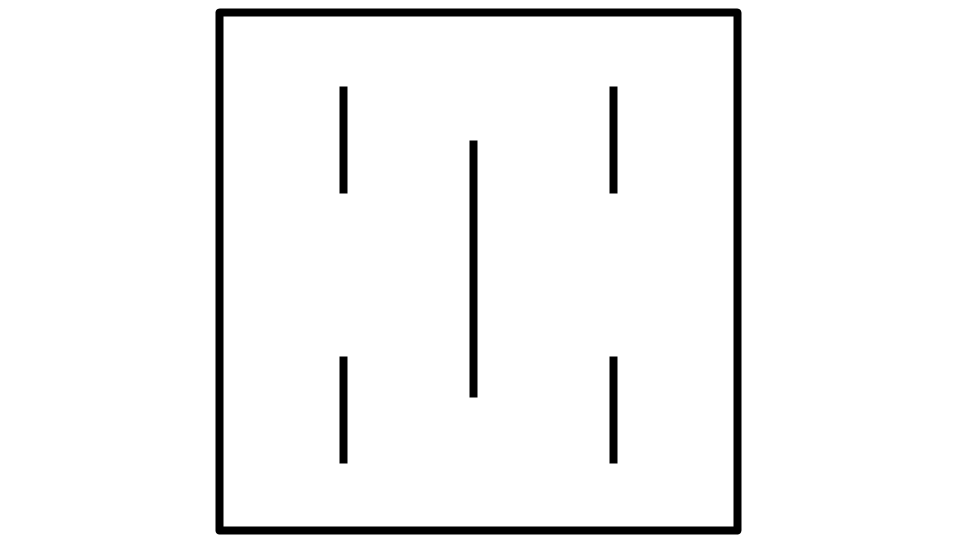}
  \caption{$M_2$}
  \label{fig:carpet2}
\end{subfigure}
\begin{subfigure}{0.3\textwidth}
  \centering
  \includegraphics[width=\linewidth]{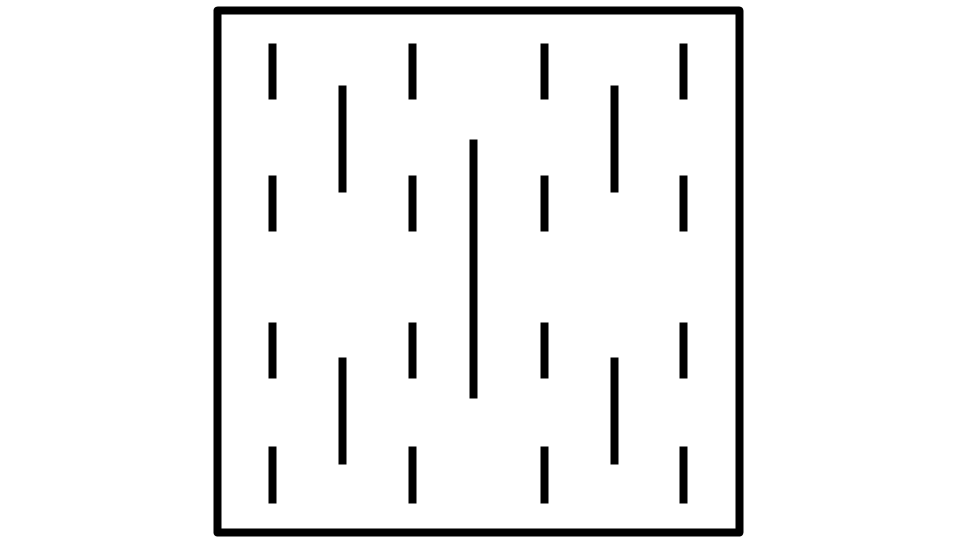}
  \caption{$M_3$}
  \label{fig:carpet3}
\end{subfigure}
\caption{The first three $M_i$.}
\label{fig:carpets}
\end{figure}

As observed by Merenkov, for each $k\leq j$, there is a natural $1$-Lipschitz mapping $\pi_{j,k}\colon \M_j \rightarrow \M_k$ obtained by identifying opposing points on slits of levels greater than $j$ corresponding to the same point in $\M_k$. These maps compose in the natural way. We can then define the Merenkov slit carpet $\M$ as the inverse limit of the system
$$ \M_0 \xleftarrow{\pi_{1,0}} \M_1 \xleftarrow{\pi_{2,1}} \M_2 \xleftarrow{\pi_{3,2}} \dots,  $$
equipped with the metric 
$$ d(x,y) = \lim_{k\rightarrow \infty} d_k(x_k, y_k)$$
for $x=(x_k)$ and $y=(y_k)$ such that $\pi_{k}(x_k)=x_{k-1}$, and similarly for $y_k$. Note that this is the limit of a bounded, increasing sequence.

Alternatively, one could also define $\M$ as the Gromov-Hausdorff limit of the $(\M_k,d_k)$, or directly by removing all the slits from $[0,1]$ and taking a metric completion with respect to the path metric.

For each $k$, there is a natural $1$-Lipschitz projection $\pi_k \colon \M \rightarrow \M_k$, given by $\pi_k((x_i))=x_k$. We let $\pi=\pi_0\colon \M \rightarrow \M_0 \cong Q_0.$ Merenkov proves the following about $\pi$:
\begin{lemma}[\cite{Me}, Lemma 2.3]
The map $\pi\colon \M \rightarrow \M_0\cong Q_0$ is regular.
\end{lemma}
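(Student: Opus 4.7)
The plan is to derive the regularity of $\pi$ from the Ahlfors 2-regularity of $\M$ (which Merenkov establishes alongside the statement we are proving) by a measure-comparison and packing argument. Given any ball $B(x,r) \subseteq Q_0$, I would produce a cover of $\pi^{-1}(B(x,r))$ by a number of $\M$-balls of radius $r$ that is bounded independently of $x$ and $r$.

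First I would collect two preliminary facts. The map $\pi$ is 1-Lipschitz, immediate from $\M$ being an inverse limit of the 1-Lipschitz maps $\pi_{j,k}$, and is at most 2-to-1: the fiber $\pi^{-1}(q)$ is a singleton unless $q$ lies in the interior of some slit $s_{l,k}^n$, in which case it consists of the two sides of that slit. Pairwise disjointness of the slits follows from the fact that a level-$n$ slit has $x$-coordinate $(2l+1)2^{-n-1}$, whose reduced form has denominator $2^{n+1}$, so the $x$-coordinate uniquely determines both the level $n$ and the column $l$. Second, I would establish the comparison $\mathcal{H}^2(\pi^{-1}(B)) \leq C\, \mathcal{H}^2(B)$ for all Borel $B \subseteq Q_0$: the union of slits is $\mathcal{H}^2$-null in $Q_0$, so the 2-to-1 portion of $\pi$ contributes no measure, and on the complement of the slits $\pi$ is locally an isometry into $Q_0$, so $\mathcal{H}^2$ on $\M$ agrees with Euclidean area pulled back via $\pi$.

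With these ingredients a standard packing argument closes the proof. Choose a maximal $r$-separated subset $\{y_i\}_{i=1}^N$ of $\pi^{-1}(B(x,r))$; the balls $B(y_i, r)$ cover $\pi^{-1}(B(x,r))$ by maximality, while the balls $B(y_i, r/2)$ are pairwise disjoint and, since $\pi$ is 1-Lipschitz, each is contained in $\pi^{-1}(B(x, 3r/2))$. Lower Ahlfors regularity of $\M$ then gives $\mathcal{H}^2(B(y_i, r/2)) \geq c r^2$, so summing produces
\[
N c r^2 \;\leq\; \sum_i \mathcal{H}^2(B(y_i, r/2)) \;\leq\; \mathcal{H}^2(\pi^{-1}(B(x, 3r/2))) \;\lesssim\; r^2,
\]
whence $N \lesssim 1$. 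The principal technical hurdle is the local-isometry claim used in the measure comparison: although each ball in $\M$ can meet infinitely many nested smaller slits, these contribute zero 2-measure, and one must verify that the intrinsic path metric on $\M$ nonetheless yields $\mathcal{H}^2_\M$ matching Euclidean area off the slits. Once that is in hand, the packing step is entirely routine.
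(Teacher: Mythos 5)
Your packing skeleton is fine, but both of the ingredients it rests on contain the real content of the lemma, and the justification you offer for one of them is false. The claim that $\pi$ is ``locally an isometry'' off the slits does not hold: slits of arbitrarily small scale accumulate near every point of $\M$, and two points lying on opposite sides of a level-$m$ slit near its midpoint have projections at Euclidean distance as small as you like while their $\M$-distance is at least comparable to $2^{-m-2}$ (one must detour around the slit). So $\pi$ restricted to the complement of the slit preimages is injective and $1$-Lipschitz but is not locally bi-Lipschitz anywhere, and the identification of $\mathcal{H}^2_\M$ with pulled-back area cannot be obtained this way. The inequality $\mathcal{H}^2_\M(\pi^{-1}(B))\leq C\mathcal{H}^2(B)$ you need is true, but proving it requires the same geometric input as the lemma itself: one must control the diameter of $\pi^{-1}(D)$ for a dyadic square $D$, which uses that slits of level $<n$ have $x$-coordinates that are multiples of $2^{-n}$ and hence lie only on edges of level-$n$ squares (so they never disconnect the interior of such a square), while slits of level $\geq n$ inside $D$ stay away from the boundary of their own dyadic square and can be circumvented at bounded cost. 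Without this, a cover of $B$ by small Euclidean sets does not pull back to a cover of $\pi^{-1}(B)$ by sets of comparable $\M$-diameter, which is exactly the statement being proved.

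There is also a circularity concern: in Merenkov's paper the Ahlfors $2$-regularity of $\M$ is \emph{deduced from} the regularity of $\pi$ (the present paper notes this explicitly), so invoking it as an input requires an independent proof, and both the lower mass bound for $\M$-balls and your measure comparison reduce to the same ``bounded detour'' geometry. A cleaner variant of your scheme that avoids Ahlfors regularity entirely is available: since $\pi$ is at most $2$-to-$1$ (your disjointness-of-slits argument for this is correct and worth keeping), the images $\pi(B(y_i,r/2))$ of your disjoint balls have overlap at most $2$, so it suffices to show $\mathcal{H}^2\bigl(\pi(B_\M(y,\rho))\bigr)\gtrsim \rho^2$ --- but that lower bound is again the bounded-detour fact. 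In short, the packing argument is a routine wrapper; the proof needs the direct geometric analysis of how slits sit relative to dyadic squares, which is what Merenkov's Lemma 2.3 actually supplies and which your write-up does not engage with.
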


Merenkov uses this lemma and other properties of $\pi$ to show that $\M$ is Ahlfors $2$-regular. (See \cite[p. 370]{Me}.) 

We take the opportunity to introduce some further notation describing $\M$ that we will use below.

Each slit $s_{l,k}^n \subseteq Q_0$ has a pre-image in $\M$ under $\pi$ that is a topological circle. Set $m_{l,k}^n$ to be the midpoint of $s_{l,k}^n$ in $Q_0$, and let $m_{l,k}^{n,+}$ and $m_{l,k}^{n,-}$in $\M$ denote the two pre-images of $m_{l,k}^n$ under $\pi$. (There is one on each ``side''.) On the other hand, the top and bottom $t_{l,k}^n$ and $b_{l,k}^n$ have single pre-images under $\pi$. We denote those pre-images by $\tilde{t}_{l,k}^n$ and $\tilde{b}_{l,k}^n$, respectively.

Let 
$$\mathcal{V}_n\defeq \{(x,y) \in (Q_0)^2 : x=(l2^{-n},k2^{-n}), y = (l2^{-n},(k+1)2^{-n})\}$$ 
be the collection of all pairs of points defining vertical sides of the dyadic squares at level $n$, and $\mathcal{V} = \bigcup_{n\geq 0} \mathcal{V}$ the collection of all of these pairs at all scales. We say that a element of $\mathcal{V}$ is at level $n$ if $(x,y) \in \mathcal{V}_n$.

We use $\mathcal{V}$ to define a set of ``vertically adjacent'' pairs of points in the carpet $\M$. Let
$$ \mathcal{W}_n = \{ (v,w)\in\M^2 : (\pi(v),\pi(w))\in \mathcal{V} \text{ and } d(v,w) = |\pi(v)-\pi(w)| \}$$
and
$$\mathcal{W}=\cup_{n\geq 0} \mathcal{W}_n.$$
The point of the condition $d(v,w) = |\pi(v)-\pi(w)|$ in the definition of $\mathcal{W}$ is that, if $\pi(v)$ and $\pi(w)$ lie on a common slit in $Q_0$, then $v$ and $w$ must lie on the same ``side'' of that slit in $\M$. Note that, for instance, all of the following pairs are in $\mathcal{W}$:
$$ (\tilde{b}_{l,k}^n, m_{l,k}^{n,\pm}), (m_{l,k}^{n,\pm}, \tilde{t}_{l,k}^n).$$

\section{Proof of Theorem \ref{main-theorem}}

We begin with a few preliminary lemmas, and then give the proof of Theorem \ref{main-theorem}.

First, we observe that line segments in $Q_0$ between ``vertical pairs'' of points $(v,w)\in\mathcal{W}$ can be approximated by discrete paths which have a significant fraction of their length lying along slits.

\begin{lemma} \label{lem:approx} Let $(v,w)\in\mathcal{W}$. Then, for every $m > n$, there is a discrete path $(q_0, q_1, \dots, q_N)$ in $\M$ and a subset $G \subseteq \{0, \dots, N-1\}$ such that 
\begin{enumerate}
\item\label{eq:approx1} $q_0=v$, $q_N=w$,
\item\label{eq:approx1.5} $d(q_1,q_0)=d(q_N,q_{N-1})\leq 2^{-m-1}$,
\item\label{eq:approx2} $$\sum_{i=0}^{N-1} d(q_i,q_{i+1}) = 2^{-n} + 2^{-m},$$
\item\label{eq:approx3} $$\sum_{i \in G} d(q_i,q_{i+1}) = 2^{-n-1},$$
\item\label{eq:approx4} if $i \in G$, then $[\pi(q_i),\pi(q_{i+1})] = s_{a,b}^m$ for some $a,b$, and
\item\label{eq:approx5} if $i\notin G \cup \{0,N-1\}$, then $(q_i,q_{i+1})\in\mathcal{W}$.
\end{enumerate}
\end{lemma}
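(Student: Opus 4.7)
The plan is to construct an explicit discrete path that runs just to one side of the vertical edge from $\pi(v)$ to $\pi(w)$, threading through all $2^{m-n}$ level-$m$ slits in the adjacent sub-column. Write $\pi(v) = (l 2^{-n}, k 2^{-n})$ and $\pi(w) = (l 2^{-n}, (k+1) 2^{-n})$. After a harmless left-right reflection I may assume $l < 2^n$ and that $v, w$ lie on the right side of the edge, so the slits to thread are $s^m_{a, k 2^{m-n} + j}$ for $j = 0, 1, \dots, 2^{m-n}-1$ with $a = l 2^{m-n}$; these all lie on the vertical line $\{x = l 2^{-n} + 2^{-m-1}\}$.

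I would set $N = 2 + 3 \cdot 2^{m-n}$ and build the path as follows: take $q_0 = v$; let $q_1$ be the unique $\M$-preimage of $(l 2^{-n} + 2^{-m-1}, k 2^{-n})$, reached by a horizontal step of length $2^{-m-1}$; for each $j = 0, 1, \dots, 2^{m-n}-1$ in turn, the path visits $\tilde{b}^m_{a, k 2^{m-n} + j}$, then $\tilde{t}^m_{a, k 2^{m-n} + j}$, then the unique $\M$-preimage of $(l 2^{-n} + 2^{-m-1}, k 2^{-n} + (j+1) 2^{-m})$; finally, $q_N = w$ is reached by another horizontal step of length $2^{-m-1}$. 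Take $G$ to be the set of indices of the $2^{m-n}$ slit segments $(\tilde{b}, \tilde{t})$. Conditions (1), (2), (4), (5) are then immediate from the construction, and for (3) the total length computes to $2 \cdot 2^{-m-1} + 2^{m-n} \cdot 2^{-m-1} + 2 \cdot 2^{m-n} \cdot 2^{-m-2} = 2^{-m} + 2^{-n-1} + 2^{-n-1} = 2^{-m} + 2^{-n}$.

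The substantive point is (6). Each non-slit, non-boundary segment is a ``gap sub-step'' of vertical length $2^{-m-2}$ along the line $\{x = l 2^{-n} + 2^{-m-1}\}$, whose projected endpoints both have coordinates that are integer multiples of $2^{-m-2}$; the projected pair therefore belongs to $\mathcal{V}_{m+2}$. A parity argument---rewriting $l 2^{-n} + 2^{-m-1} = (2\alpha+1) 2^{-m'-1}$ and clearing denominators---shows that this vertical line is a slit column only at level $m$; since by construction each gap sub-step's $y$-interval lies between two consecutive level-$m$ slits and meets them only at their (uncut) endpoints, no slit of any level obstructs the straight vertical segment, so the $\M$-distance between the endpoints equals $2^{-m-2}$ and $(q_i, q_{i+1}) \in \mathcal{W}_{m+2} \subseteq \mathcal{W}$. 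The same parity check confirms that each intermediate ``corner'' point has a unique $\M$-preimage, so the discrete path is well-defined. Beyond this bookkeeping, no serious obstacle arises.
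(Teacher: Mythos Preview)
Your proposal is correct and follows essentially the same route as the paper: both construct the discrete path by sliding the vertical edge $[\pi(v),\pi(w)]$ horizontally by $2^{-m-1}$ onto the column of level-$m$ slits, discretizing into $N=3\cdot 2^{m-n}+2$ points (two horizontal end-steps, and for each of the $2^{m-n}$ level-$m$ sub-squares a gap--slit--gap triple), and taking $G$ to be the slit indices. The only cosmetic difference is that you normalize the side choice by invoking the left--right reflection symmetry of $\M$, whereas the paper argues directly that at least one of the shifts $x^\pm=x\pm(2^{-m-1},0)$ satisfies $d(v,v^\pm)=d(w,w^\pm)=2^{-m-1}$; your parity check that the shifted column carries only level-$m$ slits is exactly what underlies the paper's (unproven) assertion that each $p_i$ has a unique $\pi$-preimage.
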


\begin{proof} 
Let $x=\pi(v)$ and $y=\pi(w)$, so that $(x,y)\in\mathcal{V}_n$.

There are $k,l$ so that $x=(l2^{-n},k2^{-n}), y = (l2^{-n},(k+1)2^{-n}$.  Let $m>n$  be arbitrary. We will first define a discrete path $(p_0, \dots, p_N)$ from $x$ to $y$ in $Q_0$.

We will take this path to be the original segment $[x,y]$ shifted by $2^{-m-1}$ to the either the left or right, and then discretized appropriately. We will now describe this discretization in detail.

The points $x$ and $y$ are dyadic of level $2^{-n}$. Therefore, no slit of level $\geq n$ intersects the horizontal lines through $x$ or $y$. Moreover, the assumption that $(v,w)\in \mathcal{W}$ implies that if, $v$ and $w$ lie on a common slit, then they lie on the same ``side'' of that slit.

Set $x^{\pm} = x\pm(2^{-m-1},0)$ and $y^{\pm}=y\pm(2^{-m-1},0)$. Let $v^{\pm}$ and $w^{\pm}$ be pre-images under $\pi$ of $x^{\pm}$ and $y^{\pm}$. (These pre-images are uniquely determined: since $m>n$, $x^{\pm}$ and $y^{\pm}$ cannot lie in the interior of a slit.) Then at least one pair of distances
\begin{equation}\label{eq:plus}
 d(v^+, v) \text{ and } d(w^+,w)
\end{equation}
or
\begin{equation}\label{eq:minus}
 d(v^-,v) \text{ and } d(w^-,w)
\end{equation}
are both equal to $2^{-m-1}$. Without loss of generality, we assume the former. (Note that if $l=0$, we take the first option, while if $l=2^n$ we take the second option.)

The line segment in $Q_0$ from $x^+$ to $y^+$, is half-covered by slits $s_{a,b}^m$. Let $N = 3\cdot 2^{m-n}+2$, and first define $p_0=x,p_N = y, p_{N-1} = y^+=(l2^{-n} + 2^{-m-1},(k+1)2^{-n})$. For the remaining $p_i$ ($i\in \{1, \dots, N-2\}$), we first represent $i=3s + j$, for $j=1,2,3$ and $s =0, \dots 2^{m-n}-1$. We then set

$$p_i = \left\{ \begin{array}{l r}
(l2^{-n} + 2^{-m-1}, k2^{-n} + s2^{-m}) & i = 3s+1, s =0, \dots 2^{m-n}-1 \\
 (l2^{-n} + 2^{-m-1}, k2^{-n} + s2^{-m}+2^{-m-2}) & i = 3s+2, s =0, \dots 2^{m-n}-1 \\
  (l2^{-n} + 2^{-m-1}, k2^{-n} + s2^{-m}+3\cdot 2^{-m-2}) & i = 3s+3, s =0, \dots 2^{m-n}-1 \\
\end{array} \right.$$

See Figure \ref{fig:approxpic}.

In other words, $(p_i)$ form a discrete path that starts at $x$, takes a step of size $2^{-m-1}$ to the right, proceeds up vertically with certain jumps until reaching the height of $y$, and then takes a step of size $2^{-m-1}$ to the left to reach $y$.

Observe that, for each $i\in \{1,\dots, N-1\}$, the point $p_i$ does not lie in the interior of any slit, and so has a unique pre-image $q_i\in \M$ under $\pi$. Moreover, we have
\begin{equation}\label{eq:qp}
d(q_i, q_{i+1}) = |p_i - p_{i+1}| \text{ for each } i\in\{0,\dots,N-1\} .
\end{equation}
Indeed, if $i\in\{1,\dots, N-2\}$ the step from $p_i$ to $p_{i+1}$ is in the vertical direction, in which case $d_k(p_i,p_{i+1})$ is the length of the segment $[p_i,p_{i+1}]$ for each $i$ and $k$. If we are in the horizontal step in which $i=0$ or $i=N-1$, equation \eqref{eq:qp} holds because of our understanding that both distances in \eqref{eq:plus} are $\leq 2^{-m-1}$. 

With this definition of $(q_i)$ and \eqref{eq:qp}, \eqref{eq:approx1} and \eqref{eq:approx1.5} are immediate. Item \eqref{eq:approx2} is also simple:
$$\sum_{i=0}^{N-1} d(q_i,q_{i+1}) = \sum_{i=0}^{N-1} |p_i-p_{i+1}| = 2^{-n} + 2^{-m},$$
since the $(p_i)$ form of a discrete vertical geodesic path of length $2^{-n}$, plus two horizontal steps of size $2^{-m-1}$.

We now set 
$$G = \{3s + 2 \ | \ s =0, \dots 2^{m-n}-1\}.$$
By the definition of $p_i$, it is easy to see that if $i\in G$, then $p_i=\pi(q_i)$ and $p_{i+1}=\pi(q_{i+1})$ are the bottom and top, respectively, of a slit $s^m_{a,b}$ in a cube of side length $2^{-m}$. This verifies \eqref{eq:approx4}.

Item \eqref{eq:approx5} is also simple by inspection: If $i\notin G\cup\{0,N-1\}$, then the formulae above for $p_i=\pi_i(q_i)$ and $p_{i+1}=\pi_i(q_{i+1})$ indicate that they are adjacent vertical corners of a dyadic square of side length $2^{-m-2}$, hence $(\pi(q_i),\pi(q_{i+1}))\in \mathcal{V}$. Moreover, as observed above, $d(q_i, q_{i+1})=|p_i-p_{i+1}|=|\pi(q_i)-\pi(q_{i+1})|$, which shows that $(q_i, q_{i+1})\in\mathcal{W}$.

Lastly, for item \eqref{eq:approx3}, we observe that
$$ \sum_{i \in G} |p_i-p_{i+1}|$$
is simply the total length of the slits in the vertical segment from $x+(2^{-m-1},0)$ to $y+(2^{-m-1},0)$, which is half the total length of that segment, and hence equal to $2^{-n-1}$. Item \eqref{eq:approx3} follows from this and \eqref{eq:qp}.

\begin{figure}
	\centering
		\includegraphics[scale=0.3]{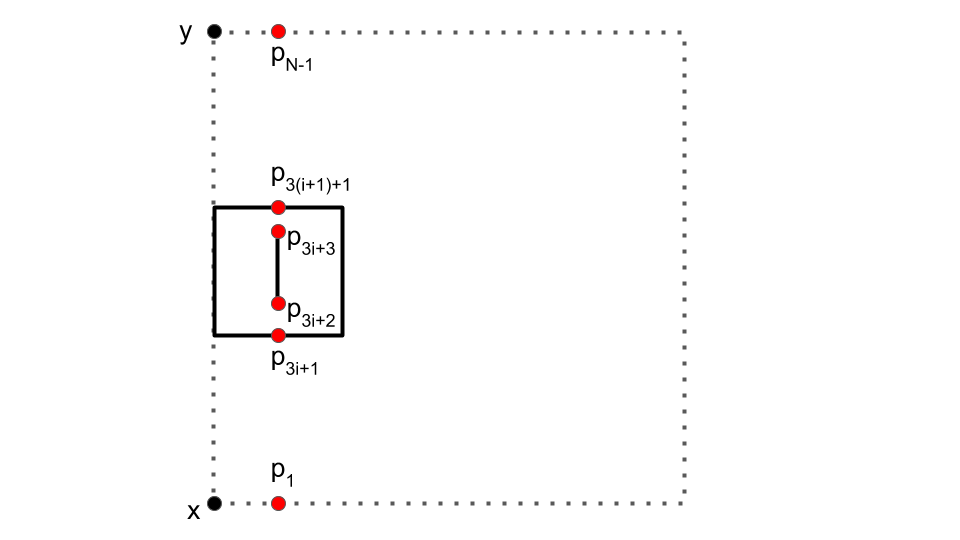}
		\caption{The points $p_i$}
	\label{fig:approxpic}
\end{figure}
\end{proof}

The next lemma concerns uniformly convex Banach spaces. In a uniformly convex Banach space, metric midpoints are unique. The uniform convexity property can be used to quantify this statement, as follows.

\begin{lemma} \label{lem:unifconvex} Suppose $x,y \in B$ are points, and $m\in B$ is an additional point. Then for every $\epsilon>0$, there exists an $\eta>0$ so that either

\begin{enumerate}
\item $$\max\{ \|x-m\|,\|y-m\|\} \geq \frac{1+\eta}{2}\|x-y\|$$ or
\item $$\left\|m-\frac{y+x}{2}\right\|  \leq \epsilon \|x-y\|.$$
\end{enumerate}
\end{lemma}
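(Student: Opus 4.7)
The plan is to reduce by homogeneity to $\|x-y\|=1$ (the degenerate case $x=y$ being trivial, as (1) holds vacuously), assume (1) fails, and extract (2) from uniform convexity. Set $u=m-x$ and $v=y-m$, so that $u+v=y-x$ has norm $1$. Failure of (1) gives $\|u\|,\|v\|<c:=(1+\eta)/2$, and the triangle inequality $\|u\|+\|v\|\geq 1$ also forces $\|u\|,\|v\|>1-c=(1-\eta)/2$. Thus both lengths sit within $\eta/2$ of $1/2$.

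Write $u=\|u\|e_1$ and $v=\|v\|e_2$ with $e_1,e_2$ unit vectors, and define $\eta_1,\eta_2\in(-\eta,\eta)$ by $\|u\|=(1+\eta_1)/2$ and $\|v\|=(1+\eta_2)/2$. The identity $\|u+v\|=1$ becomes $\|(1+\eta_1)e_1+(1+\eta_2)e_2\|=2$, and a one-line triangle inequality yields $\|e_1+e_2\|\geq 2-2\eta$. This is the moment where uniform convexity is invoked: for the target $2\epsilon$ the definition produces a modulus $\delta>0$ such that any unit vectors $e_1,e_2$ with $\|e_1+e_2\|>2(1-\delta)$ satisfy $\|e_1-e_2\|<2\epsilon$. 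Choosing $\eta<\delta$ then gives $\|e_1-e_2\|<2\epsilon$.

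To conclude, observe $m-(x+y)/2=(u-v)/2$, and expand $u-v=\frac{1}{2}(e_1-e_2)+\frac{\eta_1}{2}e_1-\frac{\eta_2}{2}e_2$. Combined with $\|e_1-e_2\|<2\epsilon$ and $|\eta_i|<\eta$, this yields $\|m-(x+y)/2\|<(\epsilon+\eta)/2$, which is at most $\epsilon$ once we additionally require $\eta<\epsilon$; rescaling restores the factor $\|x-y\|$.

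The only genuine obstacle is the asymmetry between $\|m-x\|$ and $\|m-y\|$: because uniform convexity is phrased for unit vectors, one must renormalize $u$ and $v$ before applying it and then track the $O(\eta)$ errors introduced by this renormalization. Since no estimate in the argument is sharp, any reasonable bookkeeping goes through.
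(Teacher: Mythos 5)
Your proof is correct and follows essentially the same route as the paper's: both normalize so that $\|x-y\|=1$, use failure of (1) plus the triangle inequality to pin $\|m-x\|$ and $\|y-m\|$ within $\eta/2$ of $1/2$, rescale these to unit vectors whose midpoint is within $\eta$ of the unit sphere, and invoke uniform convexity to conclude. The only differences are cosmetic bookkeeping (you feed $2\epsilon$ into the modulus and take $\eta<\min\{\delta,\epsilon\}$, where the paper feeds in $\epsilon$ and takes $\eta=\tfrac{1}{2}\min\{\epsilon,\delta\}$).
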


\begin{proof}
By translation and scaling, we can take $x=0$ and $\|y\|=1$. Apply the uniform convexity condition to $\epsilon$ to obtain a $\delta>0$. Let $\eta=\frac{1}{2}\min\{\epsilon,\delta\}.$ Suppose that the first property fails with this choice of $\eta$. Then by the triangle inequality we get

$$\frac{1-\eta}{2}\leq \|m\| \leq \frac{1+\eta}{2}$$
and
$$\frac{1-\eta}{2} \leq \|y-m\| \leq  \frac{1+\eta}{2}.$$
Define $\xi^1 = \frac{m}{\|m\|}$, and $\xi^2 = \frac{y-m}{\|y-m\|}$.  Note that
$$ \|\xi^1 - 2m\| \leq \eta \text{ and } \|\xi^2 - 2(y-m)\| \leq \eta.$$
Therefore,
$$\left\|\frac{\xi^2 + \xi^1}{2}\right\| \geq 1-\eta > 1-\delta.$$
Consequently, from the uniform convexity, $\|\xi^1-\xi^2\| \leq \epsilon$, and so $\|2m-2(y-m)\| \leq 2\eta + \epsilon$. In that case,
$$\|m-(y/2)\| = \frac{1}{4} \|2m-2(y-m)\|\leq \frac{\eta}{2}+\frac{\epsilon}{4} \leq \epsilon,$$
which gives the second possibility, as desired.
\end{proof}

On the other hand, the slit carpet $\M$ does not have unique midpoints, as the slits can be traversed on both ``sides''. The following lemma is immediate from the definition of $\M$.

\begin{lemma} \label{lem:midpoint} Suppose $s=s^{n}_{l,k} = [b_{l,k}^n,t_{l,k}^n]$ is any slit in $Q_0$. Recall the four associated points in $\M$ on $\pi^{-1}(s)$, which we called $\tilde{t}_{l,k}^n$, $\tilde{b}_{l,k}^n$, $m_{l,k}^{\pm,n}$.

Then
$$d(\tilde{t}^n_{l,k},\tilde{b}^{n}_{l,k}) = 2^{-n-1}$$
$$d(m^{n,\pm}_{l,k},\tilde{t}^n_{l,k}) = d(m^{n,\pm}_{l,k},\tilde{b}^n_{l,k}) = 2^{-n-2} = d(\tilde{t}^n_{l,k},\tilde{b}^{n}_{l,k})/2$$ and

$$d(m^{n,+}_{l,k}, m^{n,-}_{l,k}) \geq 2^{-n-1} = d(\tilde{t}^n_{l,k},\tilde{b}^{n}_{l,k}).$$
\end{lemma}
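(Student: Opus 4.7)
The plan is to handle (1) and (2) with matching upper and lower bounds, and to reduce (3) to a planar computation in the approximation $\M_{n+1}$. For (1), the $1$-Lipschitz projection $\pi\colon \M \to Q_0$ gives $d(\tilde{t}^n_{l,k}, \tilde{b}^n_{l,k}) \geq |t^n_{l,k} - b^n_{l,k}| = 2^{-n-1}$; for the matching upper bound, I would use the explicit curve running up the $+$ side (say) of $s^n_{l,k}$ from $\tilde{b}^n_{l,k}$ to $\tilde{t}^n_{l,k}$, which projects isometrically onto the slit segment and hence has length exactly $2^{-n-1}$. Part (2) follows by the same reasoning applied to the halves of this single-sided curve.

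Part (3) is the substantive one, since $\pi(m^{n,+}_{l,k}) = \pi(m^{n,-}_{l,k}) = m^n_{l,k}$ and the projection to $Q_0$ carries no information. My plan is to pass to the approximation $\M_{n+1}$, which has $s^n_{l,k}$ cut but no strictly higher-level slit cut yet. Because the projection $\pi_{n+1}\colon \M \to \M_{n+1}$ is $1$-Lipschitz (or equivalently, since the sequence $d_k$ is monotone increasing with limit $d$), it suffices to prove $d_{n+1}(m^{n,+}_{l,k}, m^{n,-}_{l,k}) \geq 2^{-n-1}$ in this approximation.

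For this bound, I plan to fix $\rho < 2^{-n-2}$ and consider the preimage in $\M_{n+1}$ of the open disc $B(m^n_{l,k}, \rho) \subseteq Q_0$. Since neither $t^n_{l,k}$ nor $b^n_{l,k}$ lies in this disc, the open segment $s^n_{l,k} \cap B(m^n_{l,k}, \rho)$ cuts the disc into two halves, and the preimage in $\M_{n+1}$ is a disjoint union of components in which $m^{n,+}_{l,k}$ and $m^{n,-}_{l,k}$ lie on opposite sides of the cut. Any curve in $\M_{n+1}$ between these two points must exit one component and re-enter the other, and both transition points project to $\partial B(m^n_{l,k}, \rho)$; so by $1$-Lipschitzness of the projection to $Q_0$, the initial sub-curve (from $m^{n,+}_{l,k}$ to the exit) and the final one (from the re-entry to $m^{n,-}_{l,k}$) each have length at least $\rho$, giving $d_{n+1}(m^{n,+}_{l,k}, m^{n,-}_{l,k}) \geq 2\rho$. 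Letting $\rho \uparrow 2^{-n-2}$ yields the claimed bound. The main obstacle to watch for is the two-component claim: the key point is that no slit of any level shares an $x$-coordinate with $s^n_{l,k}$ (a parity check on binary representations of $x$-coordinates), so any other cuts appearing inside the disc cannot reconnect the two sides of $s^n_{l,k}$ and the disconnection is clean.
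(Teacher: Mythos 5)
Your proposal is correct, and it is worth noting that the paper itself offers no argument here: it simply declares the lemma ``immediate from the definition of $\M$'' and moves on. Your write-up therefore supplies details the authors omit. Parts (1) and (2) are handled exactly as one would expect (lower bound from the $1$-Lipschitz projection $\pi\colon\M\to Q_0$, upper bound from the one-sided curve along the slit, whose length is unaffected by higher-level slits because no two slits of different levels share an $x$-coordinate). Part (3) is the only point with real content, and your separation argument is sound: for $\rho<2^{-n-2}$ the disc $B(m^n_{l,k},\rho)$ meets no slit of level $\le n$ other than $s^n_{l,k}$ itself (it sits in the interior of $D^n_{l,k}$ at distance $\ge 2^{-n-2}$ from its boundary), so its preimage in $\M_{n+1}$ splits into two open components separated by the cut, and any curve joining $m^{n,+}_{l,k}$ to $m^{n,-}_{l,k}$ must first reach a point projecting to $\partial B(m^n_{l,k},\rho)$ and later return from one, giving length $\ge 2\rho$; the reduction from $d$ to $d_{n+1}$ via monotonicity of the $d_k$ is also correct. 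Two small points you should make explicit if you write this up fully: that $\M_{n+1}$, being the completion of a length space, is itself a length space (so that arguing via lengths of curves computes $d_{n+1}$), and that the exit and re-entry parameters satisfy $t_1\le t_2$ so the two sub-curves do not overlap. Neither is a gap, just a detail to record. An alternative, slightly slicker route to (3) is to observe that any curve between the two midpoint preimages must pass through one of the two open components' complements near an endpoint of the slit and then apply the triangle inequality through $\tilde t^n_{l,k}$ or $\tilde b^n_{l,k}$, but making ``must go around an endpoint'' precise essentially reproduces your disc argument, so nothing is lost by your approach.
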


We are now ready to prove Theorem \ref{main-theorem}, that $\M$ admits no bi-Lipschitz embedding into any uniformly convex Banach space $B$. As noted above, the argument is heavily inspired by the framework of Burago and Kleiner \cite{BK} and also an argument of Laakso \cite{Laakso}.

\begin{proof}[Proof of Theorem \ref{main-theorem}]
Suppose that $B$ is a uniformly convex Banach space, and $f\colon \M \to B$ is a bi-Lipschitz mapping. Let $(b,L)$ be the bi-Lipschitz constants of $f$.

Recall the definitions of $\mathcal{W}$ and $\mathcal{W}_n$ from Section \ref{sec:notation} above.
 
 We define the maximal vertical distortion of $f$ by
 
 $$L_v = \sup_{(x,y) \in \mathcal{W}} \frac{|f(x)-f(y)|}{d(x,y)}.$$
 
Note that $L_v$ is bounded above by the Lipschitz constant $L$ of $f$, and below by the lower-Lipschitz constant $b>0$ of $f$. We proceed to derive a contradiction. 

Fix $\epsilon = \frac{b}{4L}$ and apply Lemma \ref{lem:unifconvex} to obtain a corresponding $\eta>0$. Next, choose $\eta'>0$ so that $\frac{L_v}{L_v-\eta'} < 1+\eta$.

 Choose a pair $(x,y) \in \mathcal{W}_n$ so that
\begin{equation}\label{eq:nearmax}
L_v-\frac{\eta'}{4} \leq \frac{|f(x)-f(y)|}{d(x,y)}.
\end{equation}
 
 In particular, $|f(x)-f(y)| \geq 2^{-n}L_v + \eta' 2^{-n-2}$.
 
Then, choose $m>n$ large enough so that
\begin{equation}\label{eq:mbound}
(2L+L_v)2^{-m} < \eta'2^{-n-2}.
\end{equation}

Using Lemma \ref{lem:approx} (with $v=x$ and $w=y$), we can find a discrete path $q_0, \dots, q_N$ from $x$ to $y$ in $\M$ with the properties in the statement.

By Lemma \ref{lem:approx} \eqref{eq:approx5}, for $i \not\in G\cup\{0,N-1\}$, we have $(q_i, q_{i+1})\in\mathcal{W}$, and so
$$|f(q_{i+1})-f(q_i)| \leq L_v d(q_{i+1},q_i).$$

We now argue that there exists an $i \in G$ so that 
\begin{equation}\label{eq:goodpt}
|f(q_{i+1})-f(q_{i})| \geq (L_v-\eta') d(q_{i+1},q_i).
\end{equation}
Suppose that this was not the case. In that case, using properties \eqref{eq:approx1.5}, \eqref{eq:approx2}, \eqref{eq:approx3}, and \eqref{eq:approx5} from Lemma \ref{lem:approx}, we have
\begin{align*}
|f(x)-f(y)| &\leq |f(q_1) - f(q_0) | + \sum_{i=1}^{N-2} |f(q_{i+1})-f(q_{i})| + |f(q_N) - f(q_{N-1})|\\
&\leq L2^{1-m} + \sum_{i \in G} |f(q_{i+1})-f(q_{i})| + \sum_{i \not\in G \cup \{0,N-1\}} |f(q_{i+1})-f(q_{i})| \\
&\leq L2^{1-m} + (L_v-\eta') \sum_{i \in G} d(q_{i+1},q_i) + L_v\sum_{i \not\in G}  d(q_{i+1},q_i)  \\
&= L2^{1-m} + L_v\sum_{i=0}^{N-1} d(q_{i+1},q_i) -\eta'\sum_{i \in G}  d(q_{i+1},q_i)  \\
&\leq L2^{1-m} +  L_v (2^{-n} + 2^{-m}) - \eta' 2^{n-1}\\
&= 2^{-n}\left(L_v - \frac{\eta'}{2}\right) + 2^{-m}(2L+L_v)\\
&< \left(L_v - \frac{\eta'}{4}\right)d(x,y),
\end{align*}
where in the last line we used \eqref{eq:mbound}. However, this contradicts \eqref{eq:nearmax}. Therefore, \eqref{eq:goodpt} holds for some $i\in G$.
 
We thus have an $i \in G$ so that $|f(q_{i+1})-f(q_{i})| \geq (L_v-\eta') d(q_{i+1},q_i)$. By Lemma \ref{lem:approx} \eqref{eq:approx4}, this coincides with a slit $s^{m}_{k,l}$, with $q_{i+1}=\tilde{t}^{m}_{k,l}$ and $q_i = \tilde{b}^{m}_{k,l}$. Now, consider the two pre-images $m^{m,\pm}_{k,l}$ under $\pi$ of the mid-point of this slit, as defined near the end of Section \ref{sec:notation}.

Set $M^{\pm} = f(m^{m,\pm}_{k,l})$, $X=f(q_i)$, and $Y=f(q_{i+1})$ in $B$. Since the pairs $(m^{m,\pm}_{k,l}, q_{i+1})$ and $(q_i, m^{m,\pm}_{k,l})$ are in $\mathcal{W}$, we have the following bounds:

\begin{align*}
\|M^{\pm} - X\| &\leq L_v d(m^{m,\pm}_{k,l}, q_i)  \\
&= L_v d(q_{i+1},q_i)/2 \\
&\leq \frac{L_v}{2(L_v -\eta')}\|Y-X\|\\
&< \frac{1+\eta}{2}\|Y-X\|.
\end{align*}

Similarly, we can conclude that 
$$\|M^{\pm}-Y\| < \frac{1+\eta}{2}\|Y-X\|.$$ 

Consequently, from Lemma \ref{lem:unifconvex} we have that 
$$ \left\|M^{\pm}- \frac{X+Y}{2}\right\| \leq \epsilon \|Y-X\|$$
and hence that
$$ \|M^{+}-M^{-}\| \leq 2\epsilon\|Y-X\| \leq 2\epsilon L d(q_{i+1},q_i) = \frac{b}{2}d(q_{i+1},q_i),$$
using our choice of $\epsilon = b/4L$.

On the other hand, since $f$ is bi-Lipschitz with lower Lipschitz constant $b$, we also have by Lemma \ref{lem:midpoint} that
$$ \|M^{+}-M^{-}\| \geq b d(m^{m,+}_{k,l}, m^{m,-}_{k,l}) \geq bd(q_{i+1},q_i)>0.$$ 

This is a contradiction.
\end{proof}

\bibliography{slitcarpetbib}{}

\begin{thebibliography}{10}

\bibitem{BK}
D.~Burago and B.~Kleiner.
\newblock Separated nets in {E}uclidean space and {J}acobians of bi-{L}ipschitz
  maps.
\newblock {\em Geom. Funct. Anal.}, 8(2):273--282, 1998.

\bibitem{Ch}
J.~Cheeger.
\newblock Differentiability of {L}ipschitz functions on metric measure spaces.
\newblock {\em Geom. Funct. Anal.}, 9(3):428--517, 1999.

\bibitem{CK_Banach}
J.~Cheeger and B.~Kleiner.
\newblock On the differentiability of {L}ipschitz maps from metric measure
  spaces to {B}anach spaces.
\newblock In {\em Inspired by {S}. {S}. {C}hern}, volume~11 of {\em Nankai
  Tracts Math.}, pages 129--152. World Sci. Publ., Hackensack, NJ, 2006.

\bibitem{CK}
J.~Cheeger and B.~Kleiner.
\newblock Realization of metric spaces as inverse limits, and bilipschitz
  embedding in {$L_1$}.
\newblock {\em Geom. Funct. Anal.}, 23(1):96--133, 2013.

\bibitem{CK_PI}
J.~Cheeger and B.~Kleiner.
\newblock Inverse limit spaces satisfying a {P}oincar\'e inequality.
\newblock {\em Anal. Geom. Metr. Spaces}, 3:15--39, 2015.

\bibitem{CKS}
J.~Cheeger, B.~Kleiner, and A.~Schioppa.
\newblock Infinitesimal structure of differentiability spaces, and metric
  differentiation.
\newblock {\em Anal. Geom. Metr. Spaces}, 4(1):104--159, 2016.

\bibitem{DS}
G.~David and S.~Semmes.
\newblock {\em \normalfont ``{F}ractured fractals and broken dreams''},
  volume~7 of {\em Oxford Lecture Series in Mathematics and its Applications}.
\newblock The Clarendon Press, Oxford University Press, New York, 1997.

\bibitem{GCD}
G.~C. David.
\newblock Tangents and rectifiability of {A}hlfors regular {L}ipschitz
  differentiability spaces.
\newblock {\em Geom. Funct. Anal.}, 25(2):553--579, 2015.

\bibitem{GCDLip}
G.~C. David.
\newblock On the {L}ipschitz dimension of {C}heeger-{K}leiner.
\newblock {P}reprint, 2019.
\newblock ar{X}iv:1908.04421.

\bibitem{GCDKin}
G.~C. David and K.~Kinneberg.
\newblock Lipschitz and bi-{L}ipschitz maps from {PI} spaces to {C}arnot
  groups.
\newblock {P}reprint, 2017.
\newblock ar{X}iv:1711.03533.

\bibitem{Fitzpatrick}
S.~Fitzpatrick.
\newblock Differentiation of real-valued functions and continuity of metric
  projections.
\newblock {\em Proc. Amer. Math. Soc.}, 91(4):544--548, 1984.

\bibitem{Hak}
H.~Hakobyan.
\newblock Quasisymmetrically co-{H}opfian {S}ierpi\'nski spaces and {M}enger
  {C}urve.
\newblock {P}reprint, 2017.
\newblock ar{X}iv:1712.00526.

\bibitem{He}
J.~Heinonen.
\newblock {\em \normalfont ``{L}ectures on analysis on metric spaces''}.
\newblock Universitext. Springer-Verlag, New York, 2001.

\bibitem{HS}
J.~Heinonen and S.~Semmes.
\newblock Thirty-three yes or no questions about mappings, measures, and
  metrics.
\newblock {\em Conform. Geom. Dyn.}, 1:1--12 (electronic), 1997.

\bibitem{Laakso}
T.~J. Laakso.
\newblock Plane with {$A_\infty$}-weighted metric not bi-{L}ipschitz embeddable
  to {${\Bbb R}^N$}.
\newblock {\em Bull. London Math. Soc.}, 34(6):667--676, 2002.

\bibitem{Me}
S.~Merenkov.
\newblock A {S}ierpi\'{n}ski carpet with the co-{H}opfian property.
\newblock {\em Invent. Math.}, 180(2):361--388, 2010.

\bibitem{MW}
S.~Merenkov and K.~Wildrick.
\newblock Quasisymmetric {K}oebe uniformization.
\newblock {\em Rev. Mat. Iberoam.}, 29(3):859--909, 2013.

\bibitem{MLW}
H.~Movahedi-Lankarani and R.~Wells.
\newblock On bi-{L}ipschitz embeddings.
\newblock {\em Port. Math. (N.S.)}, 62(3):247--268, 2005.

\bibitem{Preiss}
D.~Preiss.
\newblock Differentiability of {L}ipschitz functions on {B}anach spaces.
\newblock {\em J. Funct. Anal.}, 91(2):312--345, 1990.

\end{thebibliography}
\bibliographystyle{plain}

\end{document}